\newtheorem{theorem}{Theorem}
\newtheorem{claim}{Claim}
\newtheorem{definition}{Definition}
\newtheorem{proposition}{Proposition}
\numberwithin{equation}{section}
\title{K\"ahler metrics with cone singularities along a divisor of bounded Ricci curvature}
\author{Martin de Borbon}
\date{DECEMBER, 2016}
\begin{document}

\begin{abstract}

Let $D$ be a smooth divisor in a compact complex manifold $X$ and let $\beta \in (0,1)$. We use the liner theory developed by Donaldson \cite{donaldson1} to show that in any positive co-homology class on $X$ there is a K\"ahler metric  with cone angle $2\pi\beta$ along $D$ which has bounded Ricci curvature. We use this result together with the Aubin-Yau continuity method to give an alternative proof of a well-known existence theorem for Kahler-Einstein metrics with cone singularities.

\end{abstract}

\maketitle


\section{Introduction}
Let $X$ be a compact complex manifold, $D \subset X$ a smooth divisor and $\beta \in (0,1)$. Let $\omega$ be a $C^{\alpha}$ K\"ahler metric on $X$ with cone angle $2\pi\beta$ along $D$ -see Section \ref{BackgroundSection} for the definitions-; $\alpha \in (0,1)$ is the H\"older exponent, we also require that $\alpha \leq (1/\beta)-1$. Let $0< \alpha' < \alpha$ and $\epsilon >0$.

\begin{theorem} \label{Theorem1}
	 There is $\phi \in C^{2, \alpha'}(X)$ with $\| \phi \|_{2, \alpha'} < \epsilon$ such that the Ricci form of \( \omega_{\phi} = \omega + i \partial \overline{\partial} \phi \) extends to $X$ as a 2-form, smooth with respect to the complex coordinates. In particular $\omega_{\phi}$ has bounded Ricci curvature.
\end{theorem}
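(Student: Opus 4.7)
My plan is to reduce the statement to a Monge--Ampère equation that can be solved, for data close to the reference cone metric, via the implicit function theorem with Donaldson's linear theory as the inverse. Fix a smooth volume form $\Omega$ on $X$ and a smooth Hermitian metric $h$ on $\mathcal O(D)$ with defining section $s$; the $C^\alpha$ cone structure of $\omega$ lets us write $\omega^n = e^{f_0}|s|_h^{2\beta-2}\Omega$ for some $f_0 \in C^\alpha(X)$. A direct calculation on $X\setminus D$ then gives
$$\mathrm{Ric}(\omega_\phi) \;=\; -i\partial\bar\partial\bigl(f_0+\log(\omega_\phi^n/\omega^n)\bigr) + (\beta-1)\Theta_h + \mathrm{Ric}(\Omega),$$
in which $\Theta_h$ and $\mathrm{Ric}(\Omega)$ are smooth $(1,1)$-forms on $X$. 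Hence it is enough to find a small $\phi\in C^{2,\alpha'}(X)$ and a smooth function $\tilde f$ on $X$ such that $\log(\omega_\phi^n/\omega^n) = \tilde f - f_0 + c$ for a constant $c$, since then $f_0+\log(\omega_\phi^n/\omega^n)=\tilde f + c$ is smooth.

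First, I would mollify $f_0$ in charts using a partition of unity to produce $\tilde f \in C^\infty(X)$ with $\|\tilde f - f_0\|_{C^{0,\alpha'}}$ as small as desired, which is possible because $\alpha'<\alpha$. Next, I would set up the Monge--Ampère operator
$$\mathcal F(\phi) \;:=\; \log\bigl((\omega+i\partial\bar\partial \phi)^n/\omega^n\bigr),$$
viewed as a map between mean-zero subspaces of $C^{2,\alpha'}(X)$ and $C^{0,\alpha'}(X)$ adapted to the cone metric. Its derivative at $\phi=0$ is the Laplacian $\Delta_\omega$, which by Donaldson's linear theory \cite{donaldson1} is an isomorphism between these spaces. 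The implicit function theorem then furnishes a unique small $\phi$ solving $\mathcal F(\phi)=\tilde f - f_0 + c$ (the constant is forced by the cohomological constraint $\int\omega_\phi^n=\int\omega^n$), with $C^{2,\alpha'}$-norm controlled by $\|\tilde f - f_0\|_{C^{0,\alpha'}}$; choosing the mollification fine enough yields $\|\phi\|_{2,\alpha'}<\epsilon$.

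The main obstacle is faithfully matching this implicit function scheme to the cone-singular setting: one must verify, in the Hölder spaces modeled on the standard cone, that $\Delta_\omega$ is a genuine Banach-space isomorphism modulo constants and that $\mathcal F$ is continuously differentiable in a neighbourhood of the origin. The hypothesis $\alpha\le(1/\beta)-1$ plays a decisive role here, as it ensures that $i\partial\bar\partial\phi$ remains $C^{\alpha'}$ with respect to the cone metric when $\phi\in C^{2,\alpha'}$, so that the Monge--Ampère equation makes sense and the positivity $\omega_\phi>0$ is preserved under small perturbations. Both ingredients should be readable from Donaldson's framework, after which the argument becomes a routine deformation of K\"ahler potentials.
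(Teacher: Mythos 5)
Your proposal is correct and follows essentially the same route as the paper: both isolate the singular part of the Ricci potential as $(1-\beta)$ times the curvature of $(\mathcal{O}(D),h)$ plus a $C^{\alpha}$ function (your $f_0$ is, up to sign, the paper's $F=\log(|s|_h^{2\beta-2}\Omega^n/\omega^n)$), approximate that function in $C^{\alpha'}$ by one smooth in the complex coordinates, and solve the resulting Monge--Amp\`ere equation by the implicit function theorem with $\Delta_\omega$ inverted via Donaldson's Fredholm theorem. The only cosmetic difference is the normalization (you fix an additive constant on mean-zero spaces, the paper works with the constraint $\int_X e^h\omega^n=\int_X\omega^n$), which changes nothing.
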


We take $\alpha' < \alpha$ in order to approximate as much as we like the relevant $C^{\alpha}$ function (`Ricci potential' of $\omega$) by smooth functions. Theorem \ref{Theorem1} follows by performing a suitable and small change in the volume form of $\omega$, this is done via the Implicit Function Theorem and uses the linear theory developed in \cite{donaldson1}.

In a different direction; it is conjectured that the existence of Kahler metrics with cone singularities of bounded sectional curvature imposes strong restrictions on the complex geometry of the pair $(X, D)$ -see \cite{Arezzocurvature}-. More precisely, if we denote the normal bundle of $D$ as $\nu_D$ and the tangent bundles as $TX$ and $TD$; then it is expected a holomorphic splitting \( TX|_D = TD \oplus \nu_D \). Theorem \ref{Theorem1} says that there are no such restrictions for the case of Ricci curvature.

As an application of Theorem \ref{Theorem1} we give an alternative proof of a well-established existence result for K\"ahler-Einstein metrics with cone singularities (KEcs). These are metrics with cone angle \(2\pi\beta\) along $D$ which satisfy \( \mbox{Ric}(g) = \lambda g \) in the complement of $D$ for some constant $\lambda$.

\begin{theorem} \label{Theorem2}
	
	\begin{enumerate}
		\item If \( c_1 (X) - (1-\beta)c_1(D) < 0 \); then there is a unique KEcs with $\lambda=-1$.
		\item If \( c_1(X)= (1-\beta) c_1(D) \); then there is a unique KEcs with $\lambda=0$ in each K\"ahler class.
	\end{enumerate}

\end{theorem}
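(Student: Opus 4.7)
The plan is to reduce the KEcs equation to a scalar complex Monge--Amp\`ere equation and apply the Aubin--Yau continuity method on a favourable reference metric. In case (1) the K\"ahler class is forced to be $[\omega] = -c_1(X)+(1-\beta)c_1(D)$, while in case (2) I fix an arbitrary K\"ahler class. First I would pick a $C^\alpha$ cone metric $\omega_0$ in this class (say, a Donaldson model metric) and apply Theorem \ref{Theorem1} to obtain a reference $\omega = \omega_0 + i\partial\overline{\partial}\phi$ whose Ricci form is smooth in complex coordinates on $X$. Using the cohomological identity relating $\mathrm{Ric}(\omega)$, $(1-\beta)c_1(D)$ and $\lambda[\omega]$ (with $\lambda=-1$ in case (1) and $\lambda=0$ in case (2)), the $\partial\overline{\partial}$-lemma gives a real function $F$ with $\mathrm{Ric}(\omega) - \lambda\omega = i\partial\overline{\partial} F$ on $X\setminus D$; since the right hand side is at worst $C^{\alpha'}$, the linear theory of \cite{donaldson1} yields $F \in C^{2,\alpha'}(X)$. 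Finding the KEcs then amounts to solving the Monge--Amp\`ere equation $(\omega + i\partial\overline{\partial} u)^n = e^{F - \lambda u + c}\omega^n$ in $C^{2,\alpha'}(X)$ for some constant $c$.

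\textbf{Continuity path.} I would run the family
\[
(\omega + i\partial\overline{\partial} u_t)^n = e^{tF - \lambda u_t - c_t}\omega^n,\qquad t\in[0,1],
\]
with $c_t = 0$ in case (1) and $c_t = \log(\int e^{tF}\omega^n/\int\omega^n)$ in case (2), so that $u_0 = 0$ trivially solves the equation at $t=0$ and a solution at $t=1$ is the desired KEcs. Openness at each solved $t$ is the implicit function theorem: the linearisation is $\Delta_{\omega_{u_t}} + \lambda$ on the cone H\"older spaces, and it is invertible by \cite{donaldson1} (working modulo constants when $\lambda = 0$). Closedness requires uniform a priori estimates in $C^{2,\alpha'}$. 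I would prove in turn a $C^0$ bound (by the maximum principle in case (1), since $-\lambda>0$, and by Yau's Moser iteration in case (2), both using that $F$ is bounded thanks to Theorem \ref{Theorem1}); a Laplacian bound $\mathrm{tr}_\omega \omega_{u_t}\leq C$ from a Chern--Lu type inequality, with Jeffres' barrier of the form $A\log|s_D|^{2\beta}$ absorbing the blow-up of the bisectional curvature of $\omega$ along $D$; and finally a $C^{2,\alpha'}$ bound by the Evans--Krylov technique adapted to the edge setting, again via the Schauder theory of \cite{donaldson1}. The set of solvable parameters is then open, closed and nonempty, hence equals $[0,1]$.

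\textbf{Main obstacle and uniqueness.} The hardest step is the second-order estimate: the reference $\omega$, though of bounded Ricci thanks to Theorem \ref{Theorem1}, has bisectional curvature that blows up along $D$. The bounded Ricci hypothesis is precisely what controls the $F$-dependent terms entering the Chern--Lu inequality, while the curvature singularity is handled purely by the Jeffres barrier; this coupling is what makes Theorem \ref{Theorem1} useful here. Uniqueness in case (1) follows from the strict monotonicity of the right hand side in $u$: at an interior maximum of a difference $u-\tilde u$ of two solutions one has $\omega_{u}\leq\omega_{\tilde u}$, hence $e^{u}\leq e^{\tilde u}$ after dividing the two Monge--Amp\`ere equations, forcing $u\leq \tilde u$ everywhere. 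Uniqueness in case (2) is Calabi's classical argument for prescribed volume forms, which adapts to the cone setting because the maximum principle used is local and the difference of two potentials is globally bounded.
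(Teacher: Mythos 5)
Your overall architecture matches the paper's: produce a starting metric with controlled Ricci curvature via Theorem \ref{Theorem1}, run the Aubin--Yau path, get openness from \cite{donaldson1}, the $C^0$ bound by maximum principle (resp.\ Moser iteration), the Laplacian bound by Chern--Lu, and the $C^{2,\alpha}$ bound by Schauder/Evans--Krylov. However, there is a genuine gap in your second-order estimate. The Chern--Lu inequality yields $\triangle_{\omega_t}\log \mathrm{tr}_{\omega_t}\omega \geq -C_1 - C_2\, \mathrm{tr}_{\omega_t}\omega$ only when the metric being traced has bisectional curvature bounded \emph{above}; the constant $C_2$ is precisely that upper bound. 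You propose to trace the Theorem \ref{Theorem1} metric $\omega = \omega_0 + i\partial\overline{\partial}\phi$, but Theorem \ref{Theorem1} controls only the Ricci form: the perturbation $\phi$ is merely $C^{2,\alpha'}$, so the full curvature tensor of your reference metric is not even defined pointwise, let alone bounded above. A Jeffres-type barrier cannot repair this: $i\partial\overline{\partial}\log|s|_h^2$ extends as a smooth bounded form, so adding $A\log|s|_h^{2\beta}$ (or $\delta|s|_h^{\epsilon}$) shifts the differential inequality by a bounded amount and cannot absorb an unbounded coefficient multiplying $\mathrm{tr}_{\omega_t}\omega$; the barrier's only role is to push the maximum point off $D$. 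The paper's resolution is to keep \emph{two} metrics in play: the continuity path starts at the modified metric $\omega_0=\omega_{\phi}$, chosen so that its Ricci potential $f_0$ is genuinely smooth in the complex coordinates (whence $i\partial\overline{\partial}f_0 \geq -C\omega$ uniformly along the path), while the metric traced in the Chern--Lu inequality is the \emph{unperturbed} standard reference metric $\omega = \Omega + \delta i\partial\overline{\partial}|s|_h^{2\beta}$, whose bisectional curvature is bounded above by Proposition \ref{ReferenceMetric} (via Sturm's realization of it as a submanifold of a smooth K\"ahler manifold). One then writes $\omega_t = \omega + i\partial\overline{\partial}(\phi+u_t)$ and applies the maximum principle to $\mathrm{tr}_{\omega_t}\omega - A(\phi+u_t)$. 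Your Step 2 needs to be restructured along these lines; as written, the key constant in the differential inequality is not finite.

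Two smaller points. First, you state the estimate as $\mathrm{tr}_{\omega}\omega_{u_t}\leq C$, which is the Aubin--Yau/Yau direction and requires a \emph{lower} bound on the bisectional curvature of the traced metric; the reference metrics here have bisectional curvature unbounded below near $D$, so only the Chern--Lu direction $\mathrm{tr}_{\omega_t}\omega \leq C$ is available (the two are then interchangeable via the Monge--Amp\`ere equation). Second, your uniqueness argument for $\lambda=-1$ tacitly assumes the maximum of $u-\tilde u$ is attained off $D$, where the potentials are genuinely $C^2$; this again requires the barrier trick, as in the $C^0$ step. Both are fixable, but the curvature issue above is the substantive one.
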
 

We prove Theorem \ref{Theorem2} by means of the classical Aubin-Yau continuity path, starting with a metric of bounded Ricci curvature. The openness follows from \cite{donaldson1} and the closedness from standard a priori estimates. The $C^0$ estimate uses the maximum principle when $\lambda=-1$ (see \cite{Jeffress}) and Moser iteration when $\lambda=0$ (see \cite{brendle}). The $C^2$ estimate follows from the maximum principle applied to the Chern-Lu inequality, together with the fact that there is a reference metric with bisectional curvature bounded above (see \cite{JMR}). Finally, the $C^{2, \alpha}$ estimate follows from the interior Schauder estimates in \cite{ChenWang}.

The proof of Theorem \ref{Theorem2} presents -in this simpler compact setting- the arguments in \cite{martin} used to establish an existence theorem for asymptotically conical Ricci-flat K\"ahler metrics with cone singularities. In that context, the method we use to show Theorem \ref{Theorem1} serves to produce asymptotically conical metrics which are Ricci-flat outside a compact set -see Proposition 6 in \cite{martin}-. 

\subsection*{Acknowledgments}  This article contains material from the author's PhD Thesis at Imperial College, founded by the European Research Council Grant 247331 and defended in December 2015. I wish to thank my supervisor, Simon Donaldson, for his encouragement and  support.

\section{Background} \label{BackgroundSection}

\subsection{Linear Theory}
Fix \( 0 < \beta < 1 \). We work on \( \mathbb{C}^n \) with complex coordinates \( z_1, \ldots, z_n \). Consider the model metric
\begin{equation} \label{FlatMetric}
	 g_{(\beta)} = \beta^2 |z_1|^{2\beta-2} |dz_1|^2 + \sum_{j=2}^n |dz_j|^2,
\end{equation}
which has cone singularities of total angle \(2\pi\beta\) along \(\{z_1=0\}\). It induces a distance $d_{\beta}$ and therefore, for each \( \alpha \in (0, 1) \), a H\"older  semi-norm
\begin{equation} \label{Holder seminorm}
 [u]_{\alpha} = \sup_{x, y} \frac{|u(x) - u(y)|}{d_{\beta}(x, y)^{\alpha}} 
\end{equation}
on continuous functions defined on domains of \( \mathbb{C}^n \). If we write \(z_1= r^{1/\beta} e^{i\theta} \), then 
\(g_{(\beta)} = dr^2 + \beta^2 r^2 d\theta^2 + \sum_{j=2}^{n} |dz_j|^2\). 
In these \emph{cone coordinates}, \( (re^{i\theta}, z_2, \ldots, z_n) \), \( g_{(\beta)} \) is quasi-isometric to the Euclidean metric -indeed \( \beta^2 g_{Euc} \leq g_{(\beta)} \leq g_{Euc} \)-; therefore \ref{Holder seminorm} becomes equivalent to the standard H\"older semi-norm with respect to the Euclidean distance. 

We want to define H\"older continuous 1-forms. Set $\epsilon = dr + i \beta r d\theta$. A $(1,0)$ form $\eta$ is called $C^{\alpha}$  if $\eta= u_1 \epsilon + \sum_{j=2}^n u_j dz_j$ with $u_1, u_j$ $C^{\alpha}$ functions in the usual sense in the cone coordinates; it is also required that $u_1  =0$  on the singular set $\lbrace z_1 = 0 \rbrace$. Note that if we change $\epsilon$ with $\tilde{\epsilon} = e^{i\theta} \epsilon = \beta |z_1|^{\beta-1} dz_1$, say, in the definition then the vanishing condition implies that we get the same space. We move on and consider a 2-form $\eta$ of type $(1, 1)$, we use the basis $\lbrace \epsilon \overline{\epsilon}, \epsilon d\overline{z_j}, dz_j\overline{\epsilon}, dz_jd\overline{z_k} \rbrace$ for \( j, k=2, \ldots, n \). We say that $\eta$ is $C^{\alpha}$ if its components are $C^{\alpha}$ functions; we also require the components  corresponding to $\epsilon d\overline{z_j},  dz_j \overline{\epsilon}$ to vanish on \( \{z_1 =0\} \).  
Finally, we set $C^{2, \alpha}$ to be the space of $C^{\alpha}$ (real) functions $u$ such that $\partial u, i\partial \overline{\partial} u$ are $C^{\alpha}$. It is straightforward to introduce norms; we define the $C^{\alpha}$ norm of a function $\|u\|_{\alpha}$ as the sum of its $C^0$ norm $\|u\|_0$ and its $C^{\alpha}$ semi-norm $[u]_{\alpha}$.
The $C^{2, \alpha}$ norm of a function $u$, denoted by \( \|u\|_{2, \alpha} \), is the sum of  $\|u\|_{\alpha}$,  the $C^{\alpha}$ norm of the components of $\partial u$ in  and the $C^{\alpha}$ norm of the components of $i \partial \overline{\partial} u$.

Let $X$ be a compact complex manifold, $D \subset X$ a smooth divisor and $g$ a smooth K\"ahler metric on the complement of $D$. 

\begin{definition} \label{DefinitionMetricConeSing}
	We say that $g$ is a \(C^{\alpha}\) metric with cone angle $2 \pi \beta$ along $D$, if for every $p \in D$ we can find complex coordinates $(z_1, \ldots, z_n)$ centred at $p$ in which
	\begin{itemize}
		\item  $D = \{z_1 =0\}$.
		\item \((1/C) g_{(\beta)} \leq g \leq C g_{(\beta)}\) for some \( C>0 \).
		\item There is a local K\"ahler potential for \(g\) which belongs to \(C^{2, \alpha}\).  
	\end{itemize}
\end{definition}

It is easy to show that if \(g\) satisfies Definition \ref{DefinitionMetricConeSing}, then its tangent cone at points of \(D\) is \(g_{(\beta)}\). It is also straightforward to see that its K\"ahler form \(\omega\) defines a closed current on \(X\) -with zero Lelong numbers at points of \(D\)- and therefore there is a positive de Rham co-homology class \( [\omega] \). Nevertheless, we won't make use of these facts.

Set $\lbrace v_1, \ldots, v_n \rbrace$ to be the  vectors 
\begin{equation} \label{VECTORS}
v_1 = |z_1|^{1-\beta} \frac{\partial}{\partial z_1}, \hspace{3mm} v_j = \frac{\partial}{\partial z_j} \hspace{2mm} \mbox{for} \hspace{1mm} j = 2, \ldots n . 
\end{equation}
Note that, with respect to $g_{(\beta)}$, these vectors are orthogonal  and their length is constant.
In the complement of $D$ we have smooth functions $g_{i\overline{j}} = g (v_i, \overline{v}_j)$
which admit a  H\"older continuous extension to $D$. The matrix $(g_{i\overline{j}} (p))$ is positive definite  and  $g_{1\overline{j}} =0$  when $j \geq 2$ and $z_1=0$.
It is interesting to note that Definition \ref{DefinitionMetricConeSing} is independent of the complex coordinates $z_1, \ldots, z_n$ only if we add the restriction that $\alpha \leq \beta^{-1} -1$. Indeed, assume for simplicity that $g$ is a metric in a domain of $\mathbb{C}^2$, with standard complex coordinates $(\tilde{z}_1, \tilde{z}_2)$,  of cone angle $2\pi\beta$ along $D =\{ \tilde{z}_1 =0 \}$. We get smooth functions $\tilde{g}_{i\overline{j}}$ on the complement of $D$ which extend H\"older continuously to $D$. Set $\tilde{z}_1= z_1$ and $\tilde{z}_2 = z_1 + z_2$, so that
$$ \frac{\partial}{\partial z_1} = \frac{\partial}{\partial \tilde{z}_1} + \frac{\partial}{\partial \tilde{z}_2} , \hspace{4mm} \frac{\partial}{\partial z_2} = \frac{\partial}{\partial \tilde{z}_2} .$$
In the coordinates $(z_1, z_2)$, 
$$ g_{1\overline{1}} = \tilde{g}_{1\overline{1}} + |z_1|^{1-\beta} (\tilde{g}_{1\overline{2}} + \tilde{g}_{2\overline{1}}) + |z_1|^{2-2\beta} \tilde{g}_{2\overline{2}}, \hspace{3mm} g_{1\overline{2}}= \tilde{g}_{1\overline{2}} + |z_1|^{1-\beta} \tilde{g}_{2\overline{2}}, \hspace{3mm} g_{2\overline{2}} = \tilde{g}_{2\overline{2}} . $$
However, the function $|z_1|^{1-\beta}$ belongs to $C^{\alpha}$ only if $\alpha \leq \beta^{-1} -1$. 

There are two types of coordinates we can consider around $D$. The first is given by holomorphic coordinates $z_1, \ldots, z_n$ in which $D= \lbrace z_1 =0 \rbrace$. In the second we replace $z_1$ with $r e^{i\theta}$, by means of \(z_1=r^{1/\beta}e^{i\theta}\), and leave $z_2, \ldots, z_n$ unchanged; we refer to the last as cone coordinates. In other words,  there are two relevant differential structures on $X$ in our situation. One is given by the complex manifold structure we started with, the other  is given by declaring the cone coordinates to be smooth. The two structures are clearly equivalent by a map modelled on
\[ (re^{i\theta}, z_2, \ldots, z_n) \to (r^{1/\beta}e^{i\theta}, z_2, \ldots, z_n) \]
in a neighborhood of $D$.  The notion of a function being H\"older continuous (without specifying the exponent) is independent of the coordinates we take, however the value of its exponent does depend. We set \( C^{\alpha}(X) \) to be the space of H\"older continuous functions of exponent \( \alpha\) with respect to the cone coordinates -this agrees with the space of \( C^{\alpha \beta} \) functions with respect to the complex coordinates-. Taking a finite covering of $X$ by complex coordinate charts, it is straightforward to define the space \( C^{2, \alpha}(X) \) and endow it with a norm which makes it into a Banach space. The main result we want to recall is the following
\begin{theorem} \label{LinearTheorem}
	Assume that \( \alpha < (1/\beta)-1 \); then \( \Delta_{g} : C^{2, \alpha}(X) \to C^{\alpha}(X) \) is a Fredholm operator of index $0$.
\end{theorem}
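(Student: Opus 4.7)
The plan is to prove Theorem \ref{LinearTheorem} in three stages: first establish the local Schauder estimate for the model Laplacian $\Delta_{g_{(\beta)}}$ on $\mathbb{C}^n$; second globalize to $X$ by a partition of unity together with a perturbative argument; third deduce the Fredholm property from a compactness argument and compute the index from self-adjointness.

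The heart of the matter, and the step I expect to be the main obstacle, is the local estimate on the flat model: if $u \in C^{2,\alpha}$ is supported in a fixed ball and $\Delta_{g_{(\beta)}} u = f$ with $f \in C^{\alpha}$ (both measured in the cone distance $d_\beta$), then $\| u \|_{2, \alpha} \leq C \| f \|_{\alpha}$. This is exactly the linear theory of \cite{donaldson1}, proved by constructing a Green's function on the product cone $C_\beta \times \mathbb{C}^{n-1}$ via a Fourier decomposition in the angular variable $\theta$ and analysing the resulting singular integrals on H\"older data. The restriction $\alpha < (1/\beta) - 1$ enters essentially here: it makes $|z_1|^{1-\beta}$ lie in $C^{\alpha}$, which, as pointed out in the discussion above, is exactly what is needed for the class $C^{2, \alpha}$ to be invariant under change of holomorphic coordinates, and for the mixed $(\epsilon, d\overline{z_j})$ components of $i\partial\overline{\partial} u$ to admit the correct H\"older control.

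Given this model estimate, the globalization is routine. Cover $X$ by finitely many interior charts disjoint from $D$, where the classical Schauder theory gives the estimate, together with coordinate charts around each $p \in D$ in which Definition \ref{DefinitionMetricConeSing} provides $(z_1, \ldots, z_n)$ such that the matrix of $g_{i\overline{j}}$ in the frame \eqref{VECTORS} is $C^{\alpha}$-close to the identity. On such a chart one writes $\Delta_g = \Delta_{g_{(\beta)}} + L$ where $L$ is a second order operator in the $v_i$-frame whose coefficients are $C^{\alpha}$ and vanish at the origin; shrinking the chart makes those coefficients arbitrarily small in $C^{\alpha}$, so $Lu$ can be absorbed into the left hand side and the model estimate transfers to $\Delta_g$. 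Summing over a partition of unity yields the global a priori inequality
\begin{equation*}
\| u \|_{2, \alpha} \leq C \bigl( \| \Delta_g u \|_{\alpha} + \| u \|_0 \bigr) \quad \text{for every } u \in C^{2, \alpha}(X).
\end{equation*}

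From this the Fredholm property follows by a standard argument: the inclusion $C^{2, \alpha}(X) \hookrightarrow C^0(X)$ is compact by Arzel\`a-Ascoli, so the displayed estimate forces $\ker \Delta_g$ to be finite dimensional and the range of $\Delta_g$ to be closed. For the index, pair $\Delta_g u$ against $v \in C^{\alpha}$ in $L^2$ with respect to the volume form of $\omega$; since $D$ has real codimension two and the first derivatives of functions in $C^{2, \alpha}(X)$ are controlled relative to $g$, integration by parts over $X \setminus D$ is legitimate, and $\Delta_g$ is formally self-adjoint. Both the kernel and the cokernel are therefore the one dimensional space of constants, so $\mathrm{ind}(\Delta_g) = 0$.
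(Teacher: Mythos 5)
The paper does not actually prove Theorem \ref{LinearTheorem}; it is quoted wholesale from \cite{donaldson1}. Your outline reproduces the standard architecture of that proof: the model Schauder estimate on $C_\beta \times \mathbb{C}^{n-1}$ obtained from the Green's function, globalization by freezing coefficients (which works because $g_{1\overline{j}}$ vanishes on $D$ for $j \geq 2$, so that after a further linear change of the coordinates the frozen operator at a point of $D$ really is $\Delta_{g_{(\beta)}}$ --- the matrix $(g_{i\overline{j}}(p))$ is block diagonal but not automatically the identity), and the deduction of finite-dimensional kernel and closed range from the a priori inequality together with compactness of $C^{2,\alpha}(X) \hookrightarrow C^{0}(X)$. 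Since you, like the paper, defer the genuinely hard analytic step --- the model estimate and the precise role of the threshold $\alpha < (1/\beta)-1$ in controlling the mixed $\epsilon\, d\overline{z}_j$ components of $i\partial\overline{\partial}u$ --- to \cite{donaldson1}, there is no mismatch on that score.

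The one step that is genuinely incomplete as written is the index computation. Formal self-adjointness identifies the $L^2$-orthogonal complement of the range, but the cokernel of $\Delta_g : C^{2,\alpha}(X) \to C^{\alpha}(X)$ is the annihilator of the (closed) range inside the dual of $C^{\alpha}(X)$, whose elements are a priori only measures or distributions; to conclude that every such functional is given by integration against a constant you need a regularity statement --- any distributional solution of $\Delta_g v = 0$ across $D$ lies in $C^{2,\alpha}(X)$ and is hence constant --- and this is nontrivial precisely near $D$. Equivalently, saying that the index is $0$ here amounts to saying that the range is exactly the hyperplane $\{ f : \int_X f\, \omega^n = 0\}$, and surjectivity onto that hyperplane is essentially the content of the theorem; it cannot be read off from integration by parts alone. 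The standard ways to close this gap are either to solve $\Delta_g u = f$ weakly in $W^{1,2}$ (the $L^2$ theory is insensitive to $D$, which has zero capacity) and then upgrade the weak solution using the local Schauder estimate, or to follow the argument of \cite{donaldson1} directly; either way the regularity input is unavoidable, so you should state it explicitly rather than appeal only to self-adjointness.
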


Theorem \ref{LinearTheorem} is proved in \cite{donaldson1}. In this article \( \Delta_g \) denotes the negative or `analyst' Laplacian of \(g\).

\subsection{Standard reference metric}
Let \( \Lambda \) be the complex line bundle associated to \(D\), \(h\) a smooth Hermitian metric on it and $s$ a holomorphic section of \(\Lambda\) with \(s^{-1}(0)=D \). Let \( \Omega \) be a smooth K\"ahler metric on \(X\); for \(\delta>0\) set

\begin{equation}
	\omega = \Omega + \delta i \partial \overline{\partial} |s|_h^{2\beta} .
\end{equation}
We have the following
\begin{proposition} \label{ReferenceMetric}
If \(\delta \) is sufficiently small, then \( \omega \) is \(C^{\alpha}\) according to Definition \ref{DefinitionMetricConeSing} with \( \alpha = (1/\beta)-1 \). Moreover, its sectional curvature is uniformly bounded above.
\end{proposition}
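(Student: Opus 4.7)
The plan is to work locally near a point $p\in D$ in holomorphic coordinates $(z_1,\ldots,z_n)$ with $D=\{z_1=0\}$ and in a local trivialization of $\Lambda$ in which $|s|_h^2=|z_1|^2 e^{-\phi}$ for some smooth real-valued $\phi$. Setting $u=|s|_h^{2\beta}=|z_1|^{2\beta}e^{-\beta\phi}$, I would compute $i\partial\overline{\partial}u$ directly using $i\partial\overline{\partial}|z_1|^{2\beta}=\beta^2|z_1|^{2\beta-2}\,idz_1\wedge d\bar z_1$, the product rule, and $\partial\overline{\partial}(e^{-\beta\phi})\in C^\infty$. The outcome is a leading term $\beta^2 e^{-\beta\phi}|z_1|^{2\beta-2}\,idz_1\wedge d\bar z_1$ (which in the frame $v_1,\ldots,v_n$ reproduces the $(1,\bar1)$-entry of the model cone $g_{(\beta)}$) together with mixed terms of the form $|z_1|^{2\beta-2}\bar z_1\cdot(\text{smooth})\,dz_1\wedge d\bar z_j$, their conjugates, and smooth terms in $|z_1|^{2\beta}\,dz_j\wedge d\bar z_k$. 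Evaluated in the frame $v_j$ of (\ref{VECTORS}), these mixed contributions carry an extra factor $|z_1|^{1-\beta}\bar z_1$ and hence vanish on $\{z_1=0\}$.

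Next I would verify the three bullets of Definition \ref{DefinitionMetricConeSing}. Positivity and quasi-isometry with $g_{(\beta)}$ follow because $\Omega$ alone provides a uniformly positive definite matrix $\Omega(v_i,\bar v_j)$ (its $(1,\bar1)$-entry tends to $0$ like $|z_1|^{2(1-\beta)}$, while the other entries are smooth and positive at $p$), whereas $\delta i\partial\overline{\partial}u$ contributes $\delta\beta^2 e^{-\beta\phi}$ to the $(1,\bar 1)$-entry plus the mixed terms that vanish on $D$; for $\delta$ small the resulting Hermitian matrix is uniformly comparable to the identity in this frame, i.e.\ $\omega\simeq g_{(\beta)}$. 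For the Kähler potential regularity I pass to the cone coordinates $z_1=r^{1/\beta}e^{i\theta}$ and use the basis $\{\epsilon,dz_j\}$ with $\epsilon=\beta|z_1|^{\beta-1}dz_1$. A direct calculation gives the clean identity $|z_1|^{2\beta-2}\bar z_1\,dz_1=(r/\beta)\epsilon$ and analogously $|z_1|^{2\beta-2}\,idz_1\wedge d\bar z_1=(r/\beta^2)\,i\epsilon\wedge\bar\epsilon$; combined with the fact that $|z_1|^{2\beta}=r^2$ is smooth in cone coordinates while $\phi(r^{1/\beta}e^{i\theta},z_2,\ldots)$ is $C^\alpha$ precisely for $\alpha\leq(1/\beta)-1$, this shows $u\in C^{2,\alpha}(X)$ with $\alpha=(1/\beta)-1$ in the sense defined earlier, and hence $\omega$ admits a $C^{2,\alpha}$ local potential $\psi_\Omega+\delta u$.

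For the sectional curvature upper bound (the main obstacle), away from $D$ the metric $\omega$ is smooth, so the bound is only at issue uniformly as $z_1\to 0$. I would rescale by the standard "parabolic" scaling adapted to $g_{(\beta)}$ around points approaching $D$: zoom in at scale $|z_1|^\beta$ in the $z_1$ direction and scale $1$ in the other directions. Under this blow-down, $\Omega$ converges to a constant (hence flat) Kähler form $\Omega_0$ in the transverse directions, and $\delta i\partial\overline{\partial}u$ converges to $\delta \beta^2 e^{-\beta\phi(p)}$ times the flat cone form in the $z_1$ direction; the limit is the product $g_{(\beta)}^{\mathrm{flat}}\times\mathbb{C}^{n-1}_{\mathrm{flat}}$, which is itself flat away from the singular set. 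The curvature of $\omega$ is therefore a bounded perturbation of $0$, the bound depending only on $\delta$, on finitely many derivatives of $\phi$, and on $\Omega$. The one-sided nature of the bound is crucial: the cone angle $2\pi\beta<2\pi$ produces distributional positive sectional curvature along $D$ (a delta function in the transverse 2-plane), but only a lower bound is violated, not the upper bound. Making this rigorous amounts to writing the Riemann tensor of $\omega$ explicitly in cone coordinates and checking that all $|z_1|^{-1}$ factors coming from derivatives of $|z_1|^{2\beta-2}$ are absorbed by the corresponding factors in the inverse metric; this is the computation carried out by Jeffres--Mazzeo--Rubinstein \cite{JMR}, which I would invoke to finish.
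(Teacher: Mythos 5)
Your verification that $\omega$ satisfies Definition \ref{DefinitionMetricConeSing} is essentially the paper's own argument: expand $i\partial\overline{\partial}(F|z_1|^{2\beta})$, contract against the frame $v_1,\ldots,v_n$ of (\ref{VECTORS}), observe that the off-diagonal entries $g_{1\overline{j}}$ carry factors vanishing on $\{z_1=0\}$ and that all entries are $C^{\alpha}$ with $\alpha=(1/\beta)-1$ (the exponent restriction coming from the $|z_1|^{1-\beta}$ factors, not from $\phi$ itself, which is Lipschitz in cone coordinates). That part is fine.

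The curvature half has a genuine gap. Your assertion that ``the curvature of $\omega$ is therefore a bounded perturbation of $0$'' is false: the paper notes immediately after the proposition (citing the appendix of \cite{JMR}, and illustrating with the explicit family $g_a$) that the sectional curvature of this reference metric is \emph{unbounded below} at $D$. The blow-down argument cannot deliver what you want --- convergence of rescaled metrics to a flat product controls nothing about the curvature of the original metric at finite scale, and here the curvature genuinely diverges. For the same reason, the plan of ``checking that all $|z_1|^{-1}$ factors are absorbed by the inverse metric'' cannot succeed, since it would yield a two-sided bound that does not hold; and your appeal to the one-sidedness via a distributional delta function along $D$ is an intuition, not an argument. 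The mechanism the paper actually uses is J.~Sturm's trick: write $\omega=\Phi^{*}\Gamma$ where $\Gamma=\Omega+i\partial\overline{\partial}(F|z_{n+1}|^{2})$ is a \emph{smooth} K\"ahler metric near $0\in\mathbb{C}^{n+1}$ and $\Phi(z_1,\ldots,z_n)=(z_1,\ldots,z_n,z_1^{\beta})$. Away from $D$ the metric is then locally the induced metric on a complex submanifold of a smooth K\"ahler manifold, so by the Gauss equation (Griffiths--Harris, Section 0.5) its holomorphic sectional curvature is bounded above by that of the ambient metric, and an upper bound on holomorphic sectional curvature implies one on sectional curvature. This is the step your proposal is missing; without it, or some substitute that produces a genuinely one-sided estimate, the second assertion of the proposition is not proved.
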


\begin{proof}
	By compactness, it is enough to work locally. Let $F$ be a smooth positive function and let $\Omega$ be a smooth K\"ahler form, both defined on a domain in $\mathbb{C}^n$ which contains the origin. Consider the $(1,1)$ form
	\begin{equation} \label{LOC MET}
	\omega = \Omega +   i \partial \overline{\partial} (F |z_1|^{2\beta}) .
	\end{equation}
	Straightforward calculation gives us that
	$$  i \partial \overline{\partial} (F |z_1|^{2\beta}) = |z_1|^{2\beta} i \partial \overline{\partial} F + \beta |z_1|^{2\beta-2} \left( \overline{z}_1 i dz_1 \wedge \overline{\partial}F + z_1 i\partial F \wedge d \overline{z}_1 \right) + \beta^2 |z_1|^{2\beta-2} F idz_1 \wedge d\overline{z}_1 . $$
	Let $I$ be the complex structure of $\mathbb{C}^n$ and $ g = \omega (., I.)$. Let $v_1, \ldots, v_n$ be as in \ref{VECTORS}. We want to compute $g_{i\overline{j}} = g(v_i, \overline{v}_j)$. Write $ \Omega = \sum_{i, j =1}^n \Omega_{i\overline{j}} i dz_i \wedge  d\overline{z}_j$. Note that the coefficients $\Omega_{i\overline{j}}$ are given by the contraction of $\Omega$ with the standard coordinate vectors $\partial/ \partial z_i$, $\partial /\partial \overline{z}_j$, while to obtain $g_{i\overline{j}}$ we must contract $g$ with $v_i, \overline{v}_j$. It is easy to check that
	
	$$ g_{1\overline{1}} = |z_1|^{2-2\beta} \Omega_{1\overline{1}} + |z_1|^2 \frac{\partial^2 F}{\partial z_1 \partial \overline{z}_1} + \beta \left( z_1 \frac{\partial F}{\partial z_1} + \overline{z}_1 \frac{\partial F}{\partial \overline{z}_1} \right) + \beta^2 F  ; $$
	
	$$g_{1\overline{j}} = |z_1|^{1-\beta} \Omega_{1\overline{j}} + |z_1|^{1+\beta} \frac{\partial^2 F}{\partial z_1 \partial \overline{z}_j } + \beta |z_1|^{\beta -1} \left( z_1 \frac{\partial F}{\partial z_j} + \overline{z}_1 \frac{\partial F}{\partial \overline{z}_j} \right) \hspace{4mm} \mbox{for} j\geq 2 ; $$
	
	$$ g_{j\overline{k}}= \Omega_{j\overline{k}} + |z_1|^{2\beta} \frac{\partial^2 F}{\partial z_j \partial \overline{z}_k} \hspace{4mm} \mbox{for} j, k \geq 2 . $$
	It is then clear that if \(|z_1|\) is sufficiently small, then $g$ defines a $C^{\alpha}$  K\"ahler metric with $\alpha = \beta^{-1}-1$. 
	
	There is a useful way of thinking of $g$, due to J. Sturm -see \cite{Rubinstein}, Lemma 3.14- : On $\mathbb{C}^{n+1}$ with standard complex coordinates $(z_1, \ldots, z_{n+1})$ consider the $(1, 1)$ form
	$$ \Gamma = \Omega + i\partial \overline{\partial} (F |z_{n+1}|^2) . $$
	This form defines a smooth K\"ahler metric on $\mathbb{C}^{n+1}$ in a neighbourhood of $0$. Let us delete a ray in the complex plane corresponding to the $z_1$ variable and define
	$$ \Phi (z_1, \ldots, z_n) = (z_1,  \ldots, z_n, z_1^{\beta}) , $$
	so that $\omega = \Phi^{*} \Gamma$. The pull-back of $\Gamma$ by $\Phi$ is independent of the branch of $z_1^{\beta}$ that we take and we can think of the metric $g$ in the complement of $D$ as the restriction of the smooth metric defined by $\Gamma$ to a smooth complex hyper-surface in $\mathbb{C}^{n+1}$. A well-known principle says that the holomorphic sectional curvature of a complex submanifold of a K\"ahler manifold is less or equal than that of the ambient manifold, see Section 0.5 in Griffiths-Harris \cite{GriffithsHarris}. We conclude that we can restrict $g$ to a smaller neighbourhood of $0$ if necessary so that its sectional curvature is uniformly bounded above. 
	
\end{proof}

It is easy to check that \( [\omega]= [\Omega] \) as de Rham co-homology classes. We refer to \( \omega \) as the `standard reference metric'. It follows from the computations in the appendix of \cite{JMR} that the sectional curvature of \( \omega \) is unbounded below at \(D\); it then follows that the same holds for its Ricci curvature. We remark that this negative curvature phenomena is not inherent to the cone singularities, it is a consequence of the particular definition of \(\omega\). A good example to have in mind is the following: Let $a$ be a real number with $|a| <1$. Consider the metric defined in the unit disc of the complex numbers, 
$$ g_a = (a + |z_1|^{2\beta-2} )|dz_1|^2 . $$
Its Gaussian curvature is given by
$$ K_a = - 4(\beta-1)^2 a \frac{|z_1|^{2-4\beta}}{(1 + |z_1|^{2-2\beta}a)^3} . $$
If $1/2 < \beta <1$, then $K_a$ is unbounded below when $a>0$ and unbounded above if $a<0$. In higher dimensions we can take the product of $g_a$ with a flat euclidean factor $\mathbb{C}^{n-1}$.

For reference in the future; we recall that on a K\"ahler manifold there are the notions of sectional curvature, holomorphic sectional curvature and bisectional  curvature.  A uniform (upper or lower) bound in any of these three implies a uniform (upper or lower) bound on the other two.

\section{Proof of Theorem \ref{Theorem1}}

Consider the functional \( H \) given by 
\begin{equation}
	H(\phi)= \log \frac{\omega_{\phi}^n}{\omega^n}, 
\end{equation}
where \( \omega_{\phi}= \omega + i \partial\overline{\partial} \phi \); it is defined in a suitable neighbourhood of \(0\) in \( C^{2, \alpha'}(X) \) and takes values in  $C^{\alpha'}(X)$. Let \( v = \int_X \omega^n \) and \( M= \{ h \in C^{\alpha'}(X) \hspace{1mm} \mbox{s.t.} \hspace{1mm} \int_X e^h \omega^n = v \} \); integration by parts shows that \( \int_X \omega_{\phi}^n = \int_X \omega^n \) and therefore \(H(\phi)\in M \) for any \(\phi\). Clearly \(H(0)=0 \); standard computations show that \(H\) is \(C^1\) and that its derivative at \(0\) agrees with \( \Delta_g \). Write \( T_0 M = \{ \psi \in C^{\alpha'}(X) \hspace{1mm} \mbox{s.t.} \hspace{1mm} \int_X \psi \omega^n = 0 \}\)  for the tangent space of \(M\) at \(0\); and let \( L = \{ \phi \in C^{2, \alpha'}(X) \hspace{1mm} \mbox{s.t.} \hspace{1mm} \int_X \phi \omega^n = 0 \}  \). It follows from Theorem \ref{LinearTheorem} and the Implicit Function Theorem that \(H\) defines a diffeomorphism between small neighborhoods of \(0\), say \( U \subset L \) and \( V \subset M \). We can assume that \(U \subset B(0, \epsilon) \), the ball centred at the origin of radius \(\epsilon\); and that \(B(0, 2\mu) \cap M \subset V \) for some \( \mu>0 \).

On the other hand, a standard formula in K\"ahler geometry tells us that, in the complement of \(D\)
\begin{equation} \label{Ricci1}
	\mbox{Ric}(\omega_{\phi})- \mbox{Ric}(\omega) = -i \partial \overline{\partial} H(\phi);
\end{equation}
and 
\begin{equation} \label{Ricci2}
	\mbox{Ric}(\omega)- \mbox{Ric}(\Omega) = i \partial \overline{\partial} \log \frac{\Omega^n}{\omega^n} =  i \partial \overline{\partial} \log \frac{ |s|_h^{2\beta-2}\Omega^n}{\omega^n} + (1-\beta) i \partial \overline{\partial} \log |s|_h^2 .  
\end{equation}
Since \( \omega \) is a \(C^{\alpha}\) metric, the function

\[ F= \log \frac{ |s|_h^{2\beta-2}\Omega^n}{\omega^n} \]
belongs to \(C^{\alpha}(X)\). Since \(\alpha' < \alpha \), there is \( \tilde{h} \in B(0, \mu) \subset C^{\alpha'}(X) \) such that \( F - \tilde{h} \) is a smooth function on \(X\) with respect to the complex coordinates. Note that \( e^{-\mu} v \leq \int_X e^{\tilde{h}}\omega^n \leq e^{\mu} v \), so we can add a constant to \(\tilde{h}\) to get \(h \in V \) such that \(F-h\) is smooth. Write \(h=H(\phi)\) with \(\phi \in U \); \ref{Ricci1} together with \ref{Ricci2} give us

\begin{equation} \label{Ricci3}
	\mbox{Ric}(\omega_{\phi}) = \mbox{Ric}(\Omega) + (1-\beta) i \partial \overline{\partial} \log |s|_h^2 + i \partial \overline{\partial} \left( F-H(\phi) \right) .
\end{equation}
Note that $i \partial \overline{\partial} \log |s|_h^2$ extends as a smooth 2-form on \(X\), indeed it is the standard representative for \( -2\pi c_1 (\Lambda) \). Theorem \ref{Theorem1} then follows from \ref{Ricci3}.

For the sake of clarity we remark that in the proof we use standard derivatives in the complement of \(D\). If we were using currents and working globally on \(X\); then we would have to include the term \( 2 \pi (1- \beta) [D] \) on the right hand sides of equations \ref{Ricci2} and \ref{Ricci3}, with \([D]\) the current of integration along \(D\).

\section{Proof of Theorem \ref{Theorem2}}

We prove the case of negative Ricci, the case of Ricci-flat metrics goes along the same lines; the major difference is in the \(C^0\) estimate, in which Moser iteration is used instead of the maximum principle. There are no difficulties in extending the Moser iteration technique to the setting of metrics with cone singularities, for the details we refer to \cite{brendle}. We concentrate on the existence part; the uniqueness follows from the maximum principle -see \cite{Jeffress}-.

The hypothesis that $c_1 (X) - (1 - \beta) c_1 ([D]) < 0$ implies that there is a smooth K\"ahler form $\Omega$ such that
$ - (2\pi)^{-1} [ \Omega ] = c_1 (X) - (1 - \beta) c_1 ([D])$.
Take $s$ to be a holomorphic section of $\Lambda$ such that $s^{-1}(0) = D$ and let $h$ be a smooth Hermitian metric on $\Lambda$. Fix $\delta >0$ so that we have the reference metric $ \omega = \Omega + \delta i \partial \overline{\partial} |s|_h^{2\beta}$ 
of Proposition \ref{ReferenceMetric}.
\begin{claim}
	There is a $C^{\alpha}$ function $f$ on $X$, smooth in the complement of $D$,  such that
	$ \mbox{Ric}(\omega) = - \omega + i \partial \overline{\partial} f.$ We refer to \(f\) as the Ricci potential of $\omega$.
\end{claim}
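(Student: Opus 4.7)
The plan is to express $\mbox{Ric}(\omega)+\omega$ as $i\partial\overline{\partial}$ of an explicit function by separating a cohomologically trivial smooth part from a manifestly exact singular part. First I apply equation \ref{Ricci2} together with the definition of the reference metric $\omega=\Omega+\delta\,i\partial\overline{\partial}|s|_h^{2\beta}$ to rewrite
\begin{equation*}
	\mbox{Ric}(\omega)+\omega \;=\; \eta + i\partial\overline{\partial}\bigl(F+\delta |s|_h^{2\beta}\bigr),
\end{equation*}
where $F=\log(|s|_h^{2\beta-2}\Omega^n/\omega^n)\in C^{\alpha}(X)$ (already noted in the text following \ref{Ricci2}) and
\begin{equation*}
	\eta \;=\; \mbox{Ric}(\Omega)+\Omega+(1-\beta)\, i\partial\overline{\partial} \log |s|_h^2.
\end{equation*}
Here the last summand is understood in its smooth sense on $X$, namely as $i\partial\overline{\partial} \log h_{\mathrm{loc}}$ in any local trivialisation of $\Lambda$; equivalently, the Poincar\'e--Lelong current $2\pi[D]$ is discarded, exactly as in the remark at the end of Section 3. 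Thus $\eta$ is a smooth closed $(1,1)$-form on $X$.

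Next I verify that $[\eta]=0$ in $H^{2}_{dR}(X,\mathbb{R})$. Using $[\mbox{Ric}(\Omega)]=2\pi c_1(X)$ and that $i\partial\overline{\partial}\log|s|_h^2$ represents $-2\pi c_1([D])$, one obtains
\begin{equation*}
[\eta]\;=\;2\pi\bigl(c_1(X)-(1-\beta)c_1([D])\bigr)+[\Omega],
\end{equation*}
which vanishes by the very choice of $\Omega$, namely $-(2\pi)^{-1}[\Omega]=c_1(X)-(1-\beta)c_1([D])$. The global $\partial\overline{\partial}$-lemma on the compact K\"ahler manifold $(X,\Omega)$ then produces a smooth function $u\in C^{\infty}(X)$ with $\eta=i\partial\overline{\partial}u$.

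Finally I set $f=u+F+\delta |s|_h^{2\beta}$, so that $\mbox{Ric}(\omega)+\omega=i\partial\overline{\partial} f$ as smooth forms on $X\setminus D$. Regularity is immediate: $u$ is smooth on $X$; the function $|s|_h^{2\beta}$ is smooth in the cone coordinates of Section \ref{BackgroundSection} (locally, $|z_1|^{2\beta}=r^2$); and $F\in C^{\alpha}(X)$. Hence $f\in C^{\alpha}(X)$ and is smooth off $D$, as required. There is no real obstacle in the argument -- the only mildly delicate point is the bookkeeping between the smooth extension of $i\partial\overline{\partial}\log|s|_h^2$ and its current interpretation, which is precisely the distinction the paper has already highlighted.
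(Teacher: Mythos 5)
Your argument is correct and is essentially the paper's own proof: the paper likewise uses the cohomology condition on $\Omega$ to produce a smooth potential for $\Omega + \mathrm{Ric}(\Omega) + (1-\beta)i\partial\overline{\partial}\log|s|_h^2$ (your $\eta = i\partial\overline{\partial}u$) and arrives at the identical formula $f = u + \log\bigl(|s|_h^{2\beta-2}\Omega^n/\omega^n\bigr) + \delta|s|_h^{2\beta}$. The only difference is that you spell out the Chern-class bookkeeping and the $\partial\overline{\partial}$-lemma step that the paper compresses into the phrase ``the co-homology condition on $\Omega$ implies.''
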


\begin{proof}
	The co-homology condition on $\Omega$ implies that there is a smooth function $F$ on $X$ with
	$ i\partial \overline{\partial} F = \Omega + \mbox{Ric}(\Omega) + (1- \beta) i\partial \overline{\partial} \log |s|_h^2 $.
	We use that
	$ \mbox{Ric}(\omega) - \mbox{Ric}(\Omega) = i \partial \overline{\partial} \log \left( \Omega^n/\omega^n \right)$ 
	to obtain
	$$ \mbox{Ric}(\omega) = \mbox{Ric}(\Omega) +  i \partial \overline{\partial} \log \left( \frac{\Omega^n}{\omega^n} \right) =  i\partial \overline{\partial} F - \Omega -  i\partial \overline{\partial} \log  \left( \frac{|s|_h^{2-2\beta} \omega^n}{\Omega^n} \right) = -\omega + i\partial \overline{\partial}f  ; $$
	where
	$$ f = F + \delta |s|_h^{2\beta} -  \log  \left( \frac{|s|_h^{2-2\beta} \omega^n}{\Omega^n} \right) . $$
	Since \(\omega\) is \(C^{\alpha}\), we see that $f$ is a smooth function in the complement of $D$ which extends as a $C^{\alpha}$ function to $X$ with \( \alpha = (1/\beta)-1 \).
\end{proof}

We want to find $u \in C^{2, \alpha}$ a solution of

\begin{equation} \label{NEG KE}
(\omega + i \partial \overline{\partial} u )^n = e^{ f + u} \omega^n .
\end{equation}
It is  easy to argue that if we set $\omega_{KE} = \omega + i \partial \overline{\partial} u$, then $\omega_{KE}$ defines a K\"ahler metric with cone angle $2\pi \beta$ along $D$ and $\mbox{Ric}(\omega_{KE} ) = - \omega_{KE} $ in the complement of $D$. In order to solve  \ref{NEG KE} we use the Aubin-Yau continuity method. A novel feature is that the path we use $doesn't$ start at the reference metric $\omega$.

We start the continuity path with a metric whose Ricci potential is a smooth function rather than \(C^{\alpha}\), to obtain the initial metric we proceed as in the proof of Theorem \ref{Theorem1}. From now on we fix \(\alpha < (1/\beta)-1 \).  Consider the functional $\mathcal{F}: U \to C^{\alpha}$, where $U$ is a neighbourhood of $0$ in $C^{2, \alpha}$ and $\mathcal{F} (\phi) = \log (\omega_{\phi}^n / \omega^n) - \phi$. It is clear that $\mathcal{F} (0)=0$ and that the derivative at $0$ is given by $ D_0 \mathcal{F} (\phi) = \triangle_{g} \phi - \phi$. Integration by parts shows that $D_0 \mathcal{F}$ has no kernel, so that the Implicit Function Theorem together with Theorem \ref{LinearTheorem} imply that there is  $\epsilon>0$ such that for every $h \in C^{\alpha}$ with $\| h \|_{\alpha} < \epsilon$ there is $\phi \in C^{2, \alpha}$ with $\mathcal{F}(\phi) =h$. There is a function $f_0$, smooth in the complex coordinates, such that $\| f - f_0 \|_{\alpha} < \epsilon$. We let $h = f - f_0$ and take $\phi \in C^{2, \alpha}$ with $\mathcal{F} (\phi) = h$, so that $\omega_{\phi}$ satisfies
$ \omega_{\phi}^n = e^{h+\phi} \omega^n $; hence $\mbox{Ric}(\omega_{\phi}) = - \omega_{\phi} + i \partial \overline{\partial} f_0 $.

Set \( \omega_0 = \omega_{\phi} \). To solve equation \ref{NEG KE} it is enough to find $u_1 \in C^{2, \alpha}$ such that $ (\omega_0 + i \partial \overline{\partial} u_1)^n = e^{f_0 + u_1} \omega_0^n$; 
so then $u = \phi + u_1$ is the solution of \ref{NEG KE}. We use the path

\begin{equation} \label{CPath}
(\omega_0 + i\partial \overline{\partial} u_t)^n = e^{tf_0 + u_t} \omega_0^n 
\end{equation}
and consider the set
$$ T = \{ t \in [0, 1] \hspace{2mm} \mbox{such that there is} \hspace{2mm} u_t \in C^{2, \alpha} \hspace{2mm} \mbox{solving \ref{CPath}} \} . $$
We start at  $t=0$ with $u_0 =0$. The goal is to show that $T$ is open and closed.

Theorem \ref{LinearTheorem} implies that $T$ is open. The fact that $T$ is closed, and hence Theorem \ref{Theorem2}, follows from the following a priori estimate: 

\begin{proposition} \label{AprioriEstimate}
	There is a constant $C$, independent of $t \in T$, such that $\|u_t \|_{2, \alpha} \leq C$.
\end{proposition}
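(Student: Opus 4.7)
The plan is to establish, in order, a $C^0$ bound, a Laplacian bound and finally a $C^{2,\alpha}$ bound on $u_t$, all uniform in $t \in T$. This is the classical Aubin--Yau strategy adapted to the conical setting, leveraging the curvature properties of the standard reference metric from Proposition \ref{ReferenceMetric}.

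For the $C^0$ estimate I apply the maximum principle directly to equation (\ref{CPath}). At an interior maximum $p$ of $u_t$ on $X\setminus D$, the form $i\partial\overline{\partial} u_t(p)$ is nonpositive, so $\omega_{u_t}^n(p) \leq \omega_0^n(p)$, which forces $tf_0(p) + u_t(p) \leq 0$ and hence $\sup_X u_t \leq \|f_0\|_{C^0}$. A symmetric argument at the minimum yields $\|u_t\|_{C^0} \leq \|f_0\|_{C^0}$, uniformly in $t$. To justify that the extremum is attained on the smooth locus (or its value is controlled there) I will subtract a small multiple of $\log|s|_h^2$ from $u_t$, which tends to $-\infty$ on $D$, and then pass to the limit.

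For the Laplacian estimate, taking $i\partial\overline{\partial}\log$ of (\ref{CPath}) and using that $\mbox{Ric}(\omega_0) = -\omega_0 + i\partial\overline{\partial} f_0$ gives
$$\mbox{Ric}(\omega_{u_t}) = -\omega_{u_t} + (1-t)i\partial\overline{\partial} f_0.$$
Since $f_0$ is smooth with respect to the complex coordinates, the term $(1-t)i\partial\overline{\partial} f_0$ is bounded with respect to the reference metric $\omega$, so $\mbox{Ric}(\omega_{u_t}) \geq -\omega_{u_t} - C\omega$. The crucial input from Proposition \ref{ReferenceMetric} is the upper bound on the bisectional curvature of $\omega$. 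This is exactly the ingredient needed for the Chern--Lu inequality applied to the identity map $(X\setminus D, \omega_{u_t}) \to (X\setminus D, \omega)$, yielding
$$\Delta_{\omega_{u_t}} \log \mbox{tr}_{\omega_{u_t}}\omega \geq -C_1 \, \mbox{tr}_{\omega_{u_t}}\omega - C_2.$$
I then apply the maximum principle to the quantity $\log \mbox{tr}_{\omega_{u_t}}\omega - (C_1+1)u_t$, using the identity $\Delta_{\omega_{u_t}} u_t = n - \mbox{tr}_{\omega_{u_t}}\omega_0$ and the uniform equivalence of $\omega_0$ and $\omega$ to obtain an upper bound on $\mbox{tr}_{\omega_{u_t}}\omega$. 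Combined with the lower bound on $\omega_{u_t}^n/\omega^n$ coming from the Monge--Ampère equation and the $C^0$ estimate, this gives $c\omega \leq \omega_{u_t} \leq C\omega$ uniformly in $t$.

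The $C^{2,\alpha}$ estimate then follows from the interior Schauder-type estimates of Chen--Wang \cite{ChenWang} for the complex Monge--Ampère equation with conical background: with $\omega_{u_t}$ uniformly equivalent to $\omega$ and the right-hand side $tf_0 + u_t$ bounded in $C^{\alpha}(X)$, one obtains $\|u_t\|_{C^{2,\alpha}} \leq C$. The main obstacle I anticipate is the justification of the Chern--Lu maximum principle argument near $D$: the quantity $\log \mbox{tr}_{\omega_{u_t}}\omega$ need not extend continuously to $D$, and the Chern--Lu identity is a priori available only on the smooth locus. The standard remedy is to subtract $\epsilon\log|s|_h^2$ from the test function so that the maximum is attained on a compact subset of $X\setminus D$ where all computations are rigorously valid, and then to let $\epsilon \to 0$. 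Verifying that the estimate is uniform in $\epsilon$ and $t$ requires some care but is by now standard in the KEcs literature, and it is what ultimately allows the reference metric's upper curvature bound from Proposition \ref{ReferenceMetric}, despite its unbounded negative sectional curvature, to drive the argument through.
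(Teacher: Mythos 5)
Your proposal is correct and follows essentially the same three-step route as the paper: a maximum-principle $C^0$ bound with a barrier to push the extremum off $D$ (the paper uses Jeffres' barrier $\delta|s|_h^{\epsilon}$ rather than $\epsilon\log|s|_h^2$, and note that for the \emph{upper} bound you should \emph{add} a small multiple of $\log|s|_h^2$ so the perturbed function tends to $-\infty$ on $D$), then the Chern--Lu inequality driven by the upper bisectional curvature bound of the reference metric from Proposition \ref{ReferenceMetric} (the paper traces $\mathrm{tr}_{\omega_t}\omega - A\tilde{u}_t$ with $\tilde{u}_t$ the potential relative to $\omega$ itself, which avoids your appeal to the uniform equivalence of $\omega_0$ and $\omega$, but both variants work), and finally the Chen--Wang interior Schauder estimates. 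These are only cosmetic differences, so the argument stands as written.
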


 The proof of Proposition \ref{AprioriEstimate} is divided into three steps:

 \emph{Step 1. $C^0$-estimate} This is an application of the maximum principle. If $u_t$ attains its maximum at $ p \in X \setminus D$ then \ref{CPath} implies that $t f_0 (p) + u_t (p) \leq 0$ , so that $ \sup u_t \leq  \max \{- \inf f_0, 0 \}$. If the maximum is attained at $p \in D$ then one considers $ \tilde{u}_t = u_t + \delta |s|_h^{\epsilon}$ for a suitable choice of $\delta$ and $\epsilon$ positive and small. The function $\tilde{u}_t$ attains its maximum outside $D$, one gets a uniform upper bound on the supreme of $ \tilde{u}_t$ -see \cite{Jeffress}- which indeed implies a uniform upper bound on $\sup u_t$. Similarly one gets a uniform lower bound on $ \inf u_t$. As a result $\| u_t \|_0 \leq C$.
	
 \emph{Step 2. $C^2$-estimate} 
 Write $\omega_t = \omega_0 + i \partial \overline{\partial}u_t$, then \ref{CPath} implies that $\mbox{Ric}(\omega_t) = - \omega_t + (1-t) i \partial \overline{\partial} f_0$.
 Since $f_0$ is smooth, there is a constant $C_2>0$ such that $i \partial \overline{\partial} f_0 \geq - C_2 \omega$. 
 Set $C_1 =1$ so that $\mbox{Ric}(\omega_t) \geq -C_1 \omega_t - C_2 \omega$. On the other hand, the reference metric $\omega$ has bisectional curvature bounded above, so there is $C_3 >0$ such that $\mbox{Bisec}(\omega) \leq C_3$.
 
 Write  $\tilde{u}_t = \phi + u_t$, so that $\omega_t = \omega + i \partial \overline{\partial} \tilde{u}_t$. Let $ A = C_2 + 2C_3 +1$; the Chern-Lu inequality -see \cite{JMR}- tells us that
	\begin{equation} \label{CHERN LU}
	\triangle_{\omega_t} ( \mbox{tr}_{\omega_t} \omega - A \tilde{u}_t) \geq -C_1 - An + \mbox{tr}_{\omega_t}\omega. 
	\end{equation}
We already have a uniform bound on $\| \tilde{u}_t \|_0$. We use \ref{CHERN LU} and the maximum principle (as in the previous step), together with the estimate on $\| \tilde{u}_t \|_0$, to get the uniform bound $ \mbox{tr}_{\omega_t} \omega \leq C$. This bound together with \ref{CPath}  imply that $ C^{-1} \omega \leq \omega_t \leq C \omega $.
	
\emph{Step 3. $C^{2, \alpha}$-estimate} This is a local result. We appeal to the `interior Schauder estimates for the complex Monge-Ampere operator'. In the case that $\beta =1$ (no cone singularities) there is a large literature on this topic; we mention, among others,  the work of Caffarelli and Safanov for the real Monge-Ampere operator. More recently, Chen-Wang -\cite{ChenWang}- gave a new proof of these estimates by means of a `blow-up' argument, similar in spirit to Leon Simon's proof of the Schauder estimates for the Laplace operator \cite{SimonSchauder}. This technique works in the setting of metrics with cone singularities. Our previous $C^2$ estimate together with Theorem 1.7 in \cite{ChenWang} gives us that $\| u \|_{2, \alpha} \leq C$.  Alternatively; we can refer to Evans-Krillov theory and its analogue for metrics with cone singularities, see \cite{JMR}.

\bibliographystyle{plain}
\bibliography{References}

\end{document}